\documentclass[11pt]{article}
\usepackage{amsfonts}
\usepackage[all]{xy}
\usepackage{amsmath}
\usepackage{amsthm}
\usepackage{amssymb}
\usepackage{amscd}
\usepackage{graphicx}
\usepackage{microtype}
\usepackage{enumerate}
\usepackage{fullpage}
\usepackage{pinlabel}
\usepackage[top=1.3in,bottom=1.7in,left=1.3in,right=1.3in]{geometry}

\theoremstyle{definition}
\newtheorem{theorem}{Theorem}[section]

\newtheorem{corollary}[theorem]{Corollary}
\newtheorem{non-example}[theorem]{Non-example}

\newtheorem{lemma}[theorem]{Lemma}

\newtheorem{problem}[theorem]{Problem}

\newtheorem{remark}[theorem]{Remark}
\newtheorem{question}[theorem]{Question}

\newtheorem*{mainthm}{Theorem \ref{main thm}}
\newtheorem*{torfreethm}{Theorem \ref{tor free thm}}

\newcommand{\R}{\mathbb{R}}
\newcommand{\C}{\mathbb{C}}

\newcommand{\Z}{\mathbb{Z}}
\newcommand{\del}{\partial}
\newcommand{\boldhead}[1]{ {\bigskip \noindent \large \bfseries #1 \smallskip \\ }}

\DeclareMathOperator{\per}{per}
\DeclareMathOperator{\rel}{rel}

\DeclareMathOperator{\PSL}{PSL}
\DeclareMathOperator{\SO}{SO}
\DeclareMathOperator{\SL}{SL}
\DeclareMathOperator{\GL}{GL}
\DeclareMathOperator{\fix}{fix}
\DeclareMathOperator{\Diff}{Diff}
\DeclareMathOperator{\Homeo}{Homeo}

\DeclareMathOperator{\supp}{supp}

\bibliographystyle{plain}

\title{Diffeomorphism groups of balls and spheres}
\author{Kathryn Mann}
\date{}

\begin{document}

\maketitle

\abstract{In this paper we discuss the relationship between groups of diffeomorphisms of spheres and balls.  We survey results of a topological nature and then address the relationship \emph{as abstract} (discrete) \emph{groups}.  
We prove that the identity component $\Diff^\infty_0(S^{2n-1})$ of the group of $C^\infty$ diffeomorphisms of $S^{2n+1}$ admits no nontrivial homomorphisms to the group of $C^1$ diffeomorphisms of the ball $B^m$ for any $n$ and $m$.  This result generalizes theorems of Ghys and Herman.  

We also examine finitely generated subgroups of $\Diff^\infty_0(S^n)$ and produce an example of a finitely generated torsion free group $\Gamma$ with an action on $S^1$ by $C^\infty$ diffeomorphisms that does not extend to a $C^1$ action of $\Gamma$ on $B^2$. 
}

\section{Introduction}

Let $M$ be a manifold and let $\Diff^r_0(M)$ denote the group of isotopically trivial $C^r$-diffeomorphisms of $M$.  If $M$ has boundary $\del M$, there is a natural map 
$$\pi: \Diff^r_0(M) \to \Diff^r_0(\del M)$$ given by restricting the domain of a diffeomorphism to the boundary.  The map $\pi$ is surjective, as any isotopically trivial diffeomorphism $f$ of the boundary can be extended to a diffeomorphism $F$ of $M$ supported on a collar neighborhood $N \cong \del M \times I$ of $\del M$ by taking a smooth isotopy $f_t$ from $f$ to the identity, and defining $F$ to agree with $f_t$ on $\del M \times \{t\}$.   

One way to measure the difference between the groups $\Diff^r_0(M)$ and $\Diff^r_0(\del M)$ is to ask whether $\pi$ admits a section.  By section, we mean a map $$\phi: \Diff^r_0(\del M) \to \Diff^r_0(M)$$ such that $\pi \circ \phi$ is the identity on $\Diff^r_0(\del M)$.  There are several categories in which to ask this, namely 
\begin{enumerate}[i)]
\item \textbf{Topological}: Require $\phi$ to be continuous, ignoring the group structure.
\item \textbf{(Purely) group-theoretic}: Only require $\phi$ to be a group homomorphism, ignoring the topological structure on $\Diff^r_0(M)$.
\item \textbf{Extensions of group actions}: In the case where no group-theoretic section exists, we ask the following \emph{local} (in the sense of group theory) question.  For which finitely generated groups $\Gamma$ and homomorphisms $\rho: \Gamma \to \Diff^r_0(\del M)$ does there exist a homomorphism $\phi: \Gamma \to \Diff^r_0(M)$ such that  $\pi \circ \phi = \rho$?   If such a homomorphism exists, we say that $\phi$ \emph{extends} the action of $\Gamma$ on $\del M$ to a $C^r$ action on $M$.  
\end{enumerate}

In this paper, we treat the case of the ball $M = B^{n+1}$ with boundary $S^n$.   Note in the category of \emph{homeomorphisms} rather than diffeomorphisms, there is a natural way to extend homeomorphisms of $S^{n}$ to homeomorphisms of $B^{n+1}$.  This is by ``coning off" the sphere to the ball and extending each homeomorphism to be constant along rays. The result is a continuous group homomorphism $$\phi: \Homeo_0(S^n) \to \Homeo_0(B^{n+1})$$ which is also a section of $\pi: \Homeo_0(B^{n+1}) \to \Homeo_0(S^{n})$ in the sense above. 
We will see, however, that the question of sections for groups of \emph{diffeomorphisms} is much more interesting!

\boldhead{Summary of results}
Our goal in this work is to paint a relatively complete picture of known and new results for the ball $B^n$.  Here is an outline.  

\begin{list}{}{\setlength\leftmargin{0in}
\setlength\parsep{0in}
\setlength\itemsep{.15in}
\setlength\listparindent{.2in}}
\item\textbf{Topological sections.}
In Section \ref{top sec} we give brief survey of known results on existence and nonexistence of topological sections, and the relationship between topological sections and exotic spheres.  The reader may skip this section if desired; it stands independent from the rest of this paper.  

\item \textbf{Group-theoretic sections.}
In contrast with the topological case, it is a theorem of Ghys that no group theoretic sections $\phi: \Diff^r_0(S^n) \to \Diff^r_0(B^{n+1})$ exist for any $n$ or $r$.   A close reading of Ghys' work in \cite{Gh} produces \emph{finitely generated} subgroups of $\Diff_0(S^{2n-1})$ that fail to extend to $\Diff_0(B^{2n})$ and we give an explicit presentation of such a group in Section \ref{group sec}.  These examples rely heavily on the dynamics of finite order diffeomorphisms.  

\item \textbf{Extending actions of torsion free groups.}
Building on Ghys' work and using results of Franks and Handel involving \emph{distortion elements} in finite groups, in Section \ref{tor free sec} we explicitly construct a group $\Gamma$ to prove the following.   

\begin{theorem} \label{tor free thm}
There exists a finitely generated, torsion-free group $\Gamma$ and homomorphism $\rho: \Gamma \to \Diff^\infty(S^1)$ that does not extend to a $C^1$ action of $\Gamma$ on $B^2$.  
\end{theorem}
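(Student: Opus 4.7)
The approach is to replace Ghys' use of finite-order (rotational) elements with \emph{distorted} elements, giving a torsion-free source of dynamical rigidity via work of Franks and Handel. Concretely, I aim to build a finitely generated torsion-free group $\Gamma$, a faithful homomorphism $\rho: \Gamma \to \Diff^\infty_0(S^1)$, and a distinguished element $g \in \Gamma$ satisfying: (i) $g$ is distorted in $\Gamma$, i.e.\ the word length $|g^n|_\Gamma$ grows sublinearly in $n$; and (ii) $\rho(g)$ has irrational rotation number on $S^1$, e.g.\ $\rho(g)$ is smoothly conjugate to an irrational rotation.

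Once such a $\Gamma$ is in hand, the first step is the purely group-theoretic fact that distortion is preserved by any homomorphism of finitely generated groups. Thus, if $\rho$ extended to $\phi: \Gamma \to \Diff^1_0(B^2)$, then $\phi(g)$ would be a $C^1$ distortion element of the finitely generated group $\phi(\Gamma) \subset \Diff^1(B^2)$ whose restriction to $\del B^2 = S^1$ equals $\rho(g)$ and so has irrational rotation number. The contradiction will come from a Franks--Handel style rigidity theorem, applied either directly to $B^2$ or after doubling along the boundary to obtain $S^2$: a $C^1$ distortion element of $\Diff(B^2)$ cannot restrict to an irrational rotation of $\del B^2$ (equivalently, a $C^1$ distortion element of $\Diff(S^2)$ in the doubled picture cannot preserve an invariant circle on which the rotation number is irrational, since its periodic orbits are forced to be fixed points).

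The main obstacle is the construction of $\Gamma$. A natural starting point is the Baumslag--Solitar group $BS(1,2) = \langle a, b \mid bab^{-1} = a^2 \rangle$, in which $a$ is exponentially distorted ($|a^{2^n}|_{BS(1,2)} \leq 2n+1$). Standard affine actions realize $BS(1,2)$ faithfully and smoothly on $\R$ (with $a$ a translation and $b$ a dilation), and after compactifying and smoothing one obtains a smooth action on $S^1$. The trouble is that in any such obvious realization the distorted element $a$ has a parabolic fixed point rather than behaving like an irrational rotation, so its rotation number is zero. To remedy this, I would enlarge $\Gamma$ by adjoining an auxiliary circle diffeomorphism (or pass to a suitably twisted variant of $BS(1,2)$) designed so that the distorted element $g$ is conjugate to an irrational rotation, while preserving torsion-freeness and the algebraic relation producing the distortion. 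Verifying that the resulting explicitly presented group is torsion-free and admits the required smooth realization --- all while maintaining an honest distortion estimate for $\rho(g)$ --- is the technical heart of the argument. With $\Gamma$ in place, the Franks--Handel rigidity closes the proof.
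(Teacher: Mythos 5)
Your high-level strategy --- trading Ghys' finite-order elements for distorted ones and invoking Franks--Handel --- is the strategy of the paper, but the specific contradiction you propose does not close. The rigidity statement you rely on, that a $C^1$ distortion element of $\Diff^1_0(B^2)$ cannot restrict to an irrational rotation of $\del B^2$, is not a consequence of the Franks--Handel theorem and is almost certainly false. Their conclusion is that every periodic point of $f$ is a fixed point; an irrational rotation of the boundary circle has \emph{no} periodic points, so on that circle the conclusion is vacuous. (The rigid rotation of $B^2$ by an irrational angle is a smooth diffeomorphism restricting to an irrational rotation of $\del B^2$, and by constructions of Calegari--Freedman and Avila such rotations can be distorted in finitely generated groups of diffeomorphisms; even the refined Franks--Handel statement that a distorted element has a single, well-defined rotation number on an invariant annulus only tells you that the derivative at an interior fixed point is elliptic with the same angle --- exactly what the rigid rotation does.) The paper does the opposite of what you propose: it arranges for the distinguished element $t$ to be distorted (via a Baumslag--Solitar relation $ata^{-1}=t^4$ in an HNN extension) while acting on $\del B^2$ as a \emph{rational} rotation of order $3$, so that the boundary carries genuine period-$3$, non-fixed points. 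Franks--Handel then forces $\phi(t)$ to have a unique interior fixed point $x$, and the rotation-number comparison on the blown-up annulus forces $D\phi(t)_x$ to be an order-$3$ rotation. The actual contradiction is linear-algebraic: $t=[\tilde f,\tilde g]^2$ with $\tilde f,\tilde g$ commuting with $t$, so $D\phi(t)_x$ would be a commutator of elements of its own centralizer in $\GL(2,\R)$, which is abelian for an order-$3$ rotation. Your proposal has no analogue of this last step, and distortion plus an irrational boundary rotation number alone yields no contradiction.

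Separately, the construction of $\Gamma$ --- which you correctly identify as the technical heart --- is not carried out, and the route you sketch (a finitely generated torsion-free group with a distorted element smoothly conjugate to an irrational rotation) is far harder than necessary; realizing it essentially requires Avila-type fast-approximation machinery. The paper's group is explicit and cheap: take Ghys' $\langle\tilde f,\tilde g\rangle\subset\Diff^\infty_0(S^1)$ with $[\tilde f,\tilde g]^2$ equal to rotation by $2\pi/3$, lift to the torsion-free central extension in $\Diff^\infty_0(\R)$, and adjoin $a$ with $ata^{-1}=t^4$; the relation is satisfied downstairs because an order-$3$ rotation equals its own fourth power, $\rho$ is deliberately non-faithful ($\rho(a)=\mathrm{id}$), and torsion-freeness is automatic for HNN extensions of torsion-free groups. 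If you want to salvage your approach, you need both an explicit finite presentation certifying distortion and some additional algebraic relation (a commutator identity or conjugation relation) that constrains the derivative at the forced fixed point.
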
 

\noindent  Note that in contrast to Theorem \ref{tor free thm}, any action of $\Z$, of a free group, or any action of any group that is conjugate into the standard action of $\PSL(2,\R)$ on $S^1$ \emph{will} extend to an acton by diffeomorphisms on $B^2$.  

\item \textbf{Exotic homomorphisms.}
In Section \ref{proof sec}, we show that the failure of $\pi: \Diff^r_0(B^{n+1}) \to \Diff^r_0(S^n)$ to admit a section is due (at least in the case where $n$ is odd) to a fundamental difference between the \emph{algebraic structure} of groups of diffeomorphisms of spheres and groups of diffeomorphisms of balls.   We prove

\begin{theorem}\label{main thm}
There is no nontrivial group homomorphism $\Diff_0^\infty(S^{2k-1}) \to \Diff^1_0(B^m)$ for any $m$, $k \geq 1$.   
\end{theorem}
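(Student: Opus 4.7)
The plan is to argue by contradiction: suppose $\Phi: \Diff_0^\infty(S^{2k-1}) \to \Diff_0^1(B^m)$ is a nontrivial group homomorphism. By the Mather--Thurston simplicity theorem, $\Diff_0^\infty(S^{2k-1})$ is simple as an abstract group, so $\Phi$ must be injective. The strategy is then to focus on the rotation subgroup $SO(2k) \subset \Diff_0^\infty(S^{2k-1})$, where we have a rich supply of commuting finite-order elements whose images in $\Diff^1_0(B^m)$ are tightly constrained by fixed-point theorems.

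For any finite-order element $f \in SO(2k)$ of order $n$, the image $\Phi(f)$ is a finite-order $C^1$ diffeomorphism of $B^m$, hence admits a fixed point by Brouwer. Given a finite abelian subgroup $A \subset SO(2k)$, an inductive argument on $|A|$ (using Smith theory or direct averaging over fixed-point sets) yields a common fixed point $p \in B^m$ for $\Phi(A)$. At $p$ the derivative assignment $f \mapsto D_p \Phi(f)$ defines a homomorphism $A \to GL(m, \R)$, and a standard averaging argument in the $C^1$ category shows that this representation is faithful (since $\Phi|_A$ is injective and near $p$ the action is determined, up to $C^1$ conjugacy, by its linear part).

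The contradiction then comes from choosing $A$ appropriately. Take $A$ inside the maximal torus $T^k \subset SO(2k)$ and allow the orders of the cyclic factors to grow, while tracking commutation with the full centralizer of $A$ in $SO(2k)$. The commutation data together with the faithful linear representation rigidify the conjugacy class of $\Phi(A)$; varying $A$ and exploiting the divisibility of rotations (every element of $SO(2)$ has $n$-th roots for every $n$, hence so does $\Phi$ of it) produces more algebraic structure than a fixed finite-dimensional $GL(m, \R)$ can support at a common fixed point.

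The main obstacle, as I see it, is bridging abstract group theory and $C^1$ dynamics: since $\Phi$ is only assumed to be an abstract homomorphism, $\Phi|_{SO(2k)}$ need not be continuous, so one cannot invoke Bochner--Montgomery linearization directly on the full compact Lie group $SO(2k)$. The hard step is therefore to extract enough rigidity from finite-subgroup fixed-point and derivative data alone, and to leverage the divisibility and commutation structure in $SO(2k)$ to force a global constraint incompatible with the fixed target dimension $m$. The case $k=1$ (treated by Herman) and the case $k\geq 2$ (Ghys) may require separate final steps, corresponding to whether one squeezes the contradiction out of rotation-number/divisibility arguments on $S^1$ or out of the representation theory of higher-rank $SO(2k)$.
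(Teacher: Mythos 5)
Your opening moves match the paper's: simplicity of $\Diff_0^\infty(S^{2k-1})$ forces injectivity, and Brouwer plus finite-order considerations give fixed-point sets and derivative representations. But the concluding step --- extracting a contradiction from finite abelian subgroups $A$ of the maximal torus, their common fixed points, and faithful derivative representations into $\GL(m,\R)$ --- cannot work, and here is a concrete reason: the standard linear action of $\SO(2k)$ on $B^{2k}$ extends the boundary action on $S^{2k-1}$, fixes the origin, and restricts to every finite abelian subgroup with a faithful derivative representation there. So all of the data you propose to exploit (commutation with centralizers, divisibility of rotations, growth of the orders of cyclic factors) is genuinely realized in a fixed finite-dimensional $\GL$, and no amount of it produces ``more algebraic structure than $\GL(m,\R)$ can support.'' You correctly flag that you do not see how to finish; the finish as sketched is not merely hard but impossible from this input alone. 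The obstruction to extending (or to the existence of any homomorphism) is not carried by the compact subgroup $\SO(2k)$.

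What the paper does instead is bring in a huge non-linear group. Fix a prime $p$ and a free $\Z_p$-action generated by $f_p(z_1,\dots,z_k)=(\mu_1z_1,\dots,\mu_kz_k)$, and let $H'$ be the group of \emph{all} lifts to $S^{2k-1}$ of isotopically trivial diffeomorphisms of the lens space $S^{2k-1}/\langle f_p\rangle$. A short argument using the simplicity of the quotient's diffeomorphism group and the divisibility of the torus shows that $\langle f_p\rangle\cong\Z_p$ is the \emph{only} proper nontrivial normal subgroup of $H'$. One then runs a trichotomy on the action of $\phi(H')$ on the submanifold $\fix(\phi(f_p))$: if that action is trivial, there is a global fixed point and the derivative gives a representation of $H'$ into the centralizer of a finite-order matrix, i.e.\ into a product of $\GL(n_i,\C)$'s; composing a projection with the determinant yields a homomorphism that is nontrivial on $f_p$, hence (by the normal subgroup classification) must be faithful on all of the nonabelian group $H'$ into an abelian group --- contradiction. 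If instead the kernel is exactly $\Z_p$, one passes to $f_{p^2}, f_{p^3},\dots$ and descends by induction on the dimension of the fixed-point set. It is the near-simplicity of the infinite-dimensional group $H'$, not the representation theory of finite abelian groups, that kills the linear representation; any repair of your argument would need to import a comparably large subgroup with controlled normal subgroup structure.
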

This generalizes a result of M. Herman in \cite{Herman}.  
Theorem \ref{main thm} also stands in contrast to situation with homeomorphisms of balls and spheres -- any continuous foliation of $B^{n+l}$ by $n$-spheres can be used to construct a continuous group homomorphism $\Homeo_0(S^n) \to \Homeo_0(B^{n+l})$.

\end{list}

\boldhead{Acknowledgements}
The author would like to thank Christian Bonatti, Danny Calegari, Benson Farb, John Franks and Amie Wilkinson for helpful conversations and their interest in this project, and Kiran Parkhe for comments and improvements to Section 4.


\section{Topological sections: known results}\label{top sec}

In order to contrast our work on group-theoretic sections with the (fundamentally different) question of topological sections, we present a brief summary of known results in the topological case.  

Let $\Diff(B^n \rel \del)$ denote the group of smooth diffeomorphisms of $B^n$ that restrict to the identity on $\del B^n = S^{n-1}$.  The projection $\Diff(B^n) \overset{\pi}\to \Diff(S^{n-1})$ is a fibration with fiber $\Diff(B^n \rel \del)$.  Hence, asking for a topological section of $\pi$ amounts to asking for a section of this bundle. 

In low dimensions ($n \leq 3$), it is known that the fiber $\Diff(B^n \rel \del)$ is contractible, so a topological section exists.  
The $n=2$ case is a well known result of Smale \cite{Smale}, and the (even more difficult) $n=3$ case is due to Hatcher \cite{Hatcher}.   Incidentally, $\Diff_0(B^1 \rel \del)$ is also contractible and this is quite elementary -- an element of $\Diff(B^2 \rel \del)$ is a nonincreasing or nondecreasing function of the closed interval, and we can explicitly define a retraction of $\Diff(B^1 \rel \del)$ to the identity via
$$r: \Diff(B^1 \rel \del) \times [0,1] \to \Diff(B^1 \rel \del)$$
$$r(f, t)(x) = t f(x) + (1-t)x.$$

Whether $\Diff(B^4 \rel \del)$ is contractible is an open question.  To the best of the author's knowledge, whether $\Diff_0(B^4) \overset{\pi}\to \Diff_0(S^{3})$ has a section is also open.   However, in higher dimensions $\Diff(B^n \rel \del)$ is not always contractible, giving a first obstruction to a section.  This is related to the existence of exotic smooth structures on spheres.

\boldhead{Exotic spheres}
Let $f \in \Diff(B^n \rel \del)$ be a diffeomorphism.  We can use $f$ to glue a copy of $B^n$ to another copy of $B^n$ along the boundary, producing a sphere $S^n_f$ with a smooth structure.  If $f$ lies in the identity component of $\Diff(B^n \rel \del)$, then $S^n_f$ will be smoothly isotopic to the standard $n$-sphere $S^n$.   If not, there is no reason that $S^n_f$ need even be diffeomorphic to $S^n$.  In fact, it follows from the pseudoisotopy theorem of Cerf (in \cite{Ce}) that for $n \geq 5$, the induced map from $\pi_0(\Diff(B^n \rel \del))$ to the group of exotic $n$-spheres is injective.   

Moreover -- and more pertinent to our discussion -- it follows from Smale's $h$-cobordism theorem that the map from $\pi_0(\Diff(B^n \rel \del))$ to exotic $n$-spheres is \emph{surjective}.  In particular, this means that in any dimension $n$ where exotic spheres exist, $\pi_0(\Diff(B^n \rel \del)) \neq 0$.   Let us now return to the fibration $\pi: \Diff(B^n) \to \Diff(S^{n-1})$ and look at the tail end of the long exact sequence in homotopy groups.  If we consider just the identity components $\Diff_0(B^n) \overset{\pi}\to \Diff_0(S^{n-1})$ we get 

$$... \to \pi_1(\Diff_0(B^n)) \to \pi_1(\Diff_0(S^{n-1})) \to \pi_0(\Diff(B^n \rel \del)) \to 0$$

Thus, whenever exotic spheres exist, the connecting homomorphism $$\pi_1(\Diff_0(S^{n-1})) \to \pi_0(\Diff(B^n \rel \del))$$ is nonzero, and so no section of the bundle exists.  

\begin{question} 
Does this bundle have a section in any dimensions $n \geq 5$ where exotic spheres do not exist?
\end{question}

We remark that for all $n \geq 5$, it is known that $\Diff(B^n \rel \del)$ has some nontrivial higher homotopy groups.  In fact the author learned from Allen Hatcher that recent work of Crowey and Schick \cite{CS} shows that $\Diff(B^n \rel \del)$ has infinitely many nonzero higher homotopy groups whenever $n \geq 7$.


\section{Group-theoretic sections} \label{group sec}

Recall from the introduction that a \emph{group-theoretic section} of $\pi$ is a (not necessarily continuous) group homomorphism $\phi: \Diff_0^r(S^{n-1}) \to \Diff_0^r(B^n)$ such that $\pi \circ \phi$ is the identity.  Recall also that, when $\Gamma$ is a group and $\rho: \Gamma \to  \Diff_0^r(S^{n-1})$ specifies an action of $\Gamma$ on $S^{n-1}$, we say that $\rho$ \emph{extends} to a $C^r$ action on $B^n$ if there is a homomorphism $\phi: \Gamma \to \Diff_0^r(B^n)$ such that $\pi \circ \phi = \rho$.  

The question of existence of group-theoretic sections for spheres and balls is completely answered by the following theorem of Ghys. 

\begin{theorem}[Ghys, \cite{Gh}] \label{ghys thm}
There is no section of $\Diff_0^1(B^{n+1}) \to \Diff_0^1(S^n)$.  \\Moreover, there is no extension of the standard embedding of $\Diff_0^\infty(S^n)$ in $\Diff_0^1(S^n)$ to a $C^1$ action of $\Diff_0^\infty(S^n)$ on $B^{n+1}$.  
\end{theorem}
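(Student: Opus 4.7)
The plan is to assume a section or extension $\Phi: \Diff^\infty_0(S^n) \to \Diff^1_0(B^{n+1})$ exists and derive a contradiction from the dynamics of finite-order rotation subgroups. Since $\Phi$ is not assumed continuous, topological arguments on $\Diff^1_0(B^{n+1})$ are unavailable; the strategy is instead to exploit the fact that a group homomorphism must carry finite-order elements to finite-order elements, and then to use Smith theory together with Bochner's $C^1$-linearization theorem to severely constrain what those images can look like near their fixed points.

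First I would choose a prime $p$ and a rotation $R \in \SO(n+1) \subset \Diff^\infty_0(S^n)$ of order $p$ acting freely on $S^n$ (possible when $n$ is odd; the $n$ even case requires an analogous but modified choice). Then $\Phi(R) \in \Diff^1_0(B^{n+1})$ has order exactly $p$, and Smith theory for prime-order actions on disks forces $\fix(\Phi(R))$ to be a nonempty $\Z/p$-cohomology cell, lying in the interior since the boundary action of $R$ is free. At any fixed point $x_0$, Bochner's theorem provides a local $C^1$-linearization of $\Phi(R)$ whose derivative $D\Phi(R)_{x_0}$ is a faithful linear $\Z/p$-representation on $T_{x_0}B^{n+1}$. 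The next step is to leverage the rich centralizer $Z(R) \subset \Diff^\infty_0(S^n)$: by adjoining additional commuting rotations $R_i$ of distinct prime orders inside a maximal torus of $\SO(n+1)$ and iteratively applying Smith theory to the nested fixed sets of their $\Phi$-images, one builds a finite abelian subgroup $A$ whose $\Phi$-image shares a common interior fixed point, and is then jointly linearizable by Bochner to a faithful linear $A$-representation on $\R^{n+1}$.

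The main obstacle is to convert this local linearization into an algebraic contradiction. The linear $A$-representation at $x_0$ must be compatible simultaneously with the standard rotation action of $A$ on the boundary $S^n$ and with the Smith-theoretic codimensions of each $\fix(\Phi(R_i))$; for sufficiently rich $A$, these compatibility conditions over-determine the representation in a way that no genuine linear action on $\R^{n+1}$ can satisfy. The delicate point is that every step must be justified purely algebraically, without any continuity of $\Phi$, and that the possibly wild behavior of $\Phi$ on elements outside $A$ must not invalidate the local contradiction near $x_0$. Ensuring that $A$ can be made rich enough while maintaining that its $\Phi$-image still shares a common interior fixed point—so that Bochner applies to the whole of $\Phi(A)$ at once—is the crux of the argument.
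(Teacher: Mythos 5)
Your proposal is missing the one ingredient that actually produces the contradiction, and without it the approach cannot work. The group you propose to analyze --- a finite abelian group $A$ of commuting rotations in $\SO(n+1)$ --- \emph{does} extend to a $C^\infty$ action on $B^{n+1}$: simply restrict the linear action of $\SO(n+1)$ on $\R^{n+1}$ to the unit ball. Consequently, every ``compatibility condition'' you hope to extract at a common interior fixed point (Smith-theoretic codimensions of fixed sets, faithfulness of the Bochner linearization, agreement with the boundary action) is genuinely satisfied by this linear model, so no amount of enriching $A$ can over-determine the representation. The contradiction in Ghys' argument, as sketched in the paper, is essentially non-abelian: one writes the finite-order rotation $r$ as a product of commutators $r = [f,g]$ (or a product of such) with $f,g$ lying in the centralizer of $r$ (for $S^1$, take $f,g$ hyperbolic in $\PSL(2,\R)$ and pass to a finite cover so that $[\tilde f,\tilde g]$ becomes a rotation commuting with $\tilde f,\tilde g$). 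Then Kerékjártó's theorem (for $B^2$) or Smith theory (in higher dimensions) forces $\phi(r)$ to have a distinguished interior fixed point $x$ preserved by $\phi(f),\phi(g)$; taking derivatives at $x$ puts $D\phi(f)_x$ and $D\phi(g)_x$ in the linear centralizer of the nontrivial finite-order matrix $D\phi(r)_x$, and that centralizer is abelian (e.g.\ isomorphic to $\C^*$ when $n=1$), so it cannot contain a commutator equal to $D\phi(r)_x$. It is this interplay between the commutator relation upstairs and the abelian centralizer downstairs --- not the structure of the abelian rotation group itself --- that kills the extension.

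Two further points. First, you need the fixed point $x$ to be canonical (unique, or at least with its finite fixed set permuted appropriately) so that the centralizing elements actually fix it; this is exactly what Kerékjártó/Smith theory supplies and your proposal uses Smith theory only to get nonemptiness. Second, your parenthetical that ``the $n$ even case requires an analogous but modified choice'' of a free finite-order rotation is not a modification but an impossibility: an orientation-preserving finite-order diffeomorphism of an even-dimensional sphere always has a fixed point, and indeed the paper notes that the argument via free actions is specific to odd spheres, with the even-dimensional case of Ghys' theorem requiring a genuinely different use of finite-order elements.
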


We ask to what extent the failure of sections holds \emph{locally}, i.e. for finitely generated subgroups.   At one end of the spectrum, if $\Gamma$ is a free group, and $\rho: \Gamma \to \Diff^r_0(S^n)$ is any action, we can build an extension of $\rho$ by taking arbitrary $C^r$ extensions of the generators of $\rho(\Gamma)$  -- for instance, by using the collar neighborhood strategy sketched in the introduction.  There are no relations to satisfy so this defines a homomorphism and gives a $C^r$ action of $\Gamma$ on $B^{n+1}$.  

At the other end, a careful reading of Ghys' proof of Theorem \ref{ghys thm} gives the following corollary of Theorem \ref{ghys thm}.

\begin{corollary} \label{no sec cor}
For any $n$, there exists a finitely generated subgroup $\Gamma \subset \Diff_0^\infty(S^{2n-1})$ that does not extend to a subgroup of $\Diff_0^1(B^{2n})$.  
\end{corollary}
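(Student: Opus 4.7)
The plan is to unpack Ghys' proof of Theorem~\ref{ghys thm} and extract the finite list of elements of $\Diff_0^\infty(S^{2n-1})$ that the proof actually requires.

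Ghys' argument is driven by the dynamics of finite-order elements. If a section existed, then for each finite-order rotation $R \in SO(2n) \subset \Diff_0^\infty(S^{2n-1})$, the corresponding extension to the ball would itself have finite order, and hence a fixed point by Brouwer, with linearization at that fixed point giving a faithful orthogonal representation of $\langle R \rangle$ after averaging a Riemannian metric. Ghys selects finitely many such rotations $R_1, \ldots, R_k$ together with conjugating diffeomorphisms $g_1, \ldots, g_\ell \in \Diff_0^\infty(S^{2n-1})$; the conjugators force the fixed points of the various extensions to satisfy incompatible linearization constraints, producing a contradiction.

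Having identified such a finite list, set $\Gamma := \langle R_1, \ldots, R_k, g_1, \ldots, g_\ell \rangle \subset \Diff_0^\infty(S^{2n-1})$. Suppose for contradiction that some homomorphism $\phi : \Gamma \to \Diff_0^1(B^{2n})$ extends the inclusion $\Gamma \hookrightarrow \Diff_0^1(S^{2n-1})$. Every relation used in Ghys' derivation---the finite-order relations $R_i^{m_i} = \mathrm{id}$ and the required conjugacy relations $g_j R_i g_j^{-1} = R_i'$---already holds in $\Diff_0^\infty(S^{2n-1})$, so it holds in $\Gamma$, and consequently the images $\phi(R_i), \phi(g_j)$ satisfy the same relations in $\Diff_0^1(B^{2n})$. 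Ghys' argument then applies verbatim to these images and yields the same contradiction, so no such $\phi$ can exist.

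The main obstacle is the combinatorial bookkeeping at the start: one must trace through Ghys' proof carefully to isolate a finite set of generators and relations sufficient to drive the argument, and verify that all of them can be realized inside $\Diff_0^\infty(S^{2n-1})$ rather than merely $\Diff_0^1$. Once that is done, an explicit finite presentation of such a $\Gamma$ can be written down, as in the discussion to follow.
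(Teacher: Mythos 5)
Your meta-strategy---extract from Ghys' proof a finite set of generators and relations and note that the contradiction only uses those---is indeed how the paper frames this corollary. But the sketch of what that extraction yields misidentifies the mechanism, and the points you leave vague are exactly where the content lies. The relations that drive the argument are not conjugacy relations $g_j R_i g_j^{-1} = R_i'$ between rotations: they are (i) a finite-order rotation $r$ expressed as a commutator (for $n>1$, a product of commutators) of elements that \emph{commute} with $r$ or a power of $r$, together with (ii) the finite-order relation for $r$. Concretely, for $S^1$ one takes hyperbolic $f,g \in \PSL(2,\R)$ whose commutator is an order-$2$ rotation, lifts to the threefold cover so that $[\tilde f,\tilde g]^2$ becomes the central deck transformation $r$, and sets $\Gamma = \langle \tilde f,\tilde g\rangle$. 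The contradiction is then that $D\phi(r)_x$ is a nontrivial finite-order rotation of the tangent space at the fixed point $x$, hence has abelian centralizer in $\GL(2,\R)$, and yet must equal the commutator of $D\phi(\tilde f)_x$ and $D\phi(\tilde g)_x$, which lie in that centralizer. Nothing in your picture of ``conjugators forcing incompatible linearization constraints'' produces this; you never write the rotation as a commutator of centralizing elements, which is the whole point.

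Two further gaps. Brouwer is not enough: one needs $\phi(r)$ to have a \emph{unique} interior fixed point (Ker\'ekj\'art\'o's theorem for $B^2$, a Smith-theory argument for $B^{2n}$), since otherwise the elements commuting with $r$ merely preserve the fixed-point set and there is no common point at which to linearize. And the premise that Ghys' proof ``only uses finitely many elements'' is not automatic---it is precisely what is not known for even-dimensional spheres, which is why the corollary is restricted to $S^{2n-1}$; the localization must be exhibited by an explicit construction, not asserted. As written, your proposal defers all of this to ``tracing through Ghys' proof,'' and the trace you describe would not arrive at the contradiction.
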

Although this follows directly from Ghys' proof of Theorem \ref{ghys thm}, we outline the argument below in order to illustrate some of Ghys' techniques.  We pay special attention to the $n=1$ case because we will use part of this construction in Section \ref{tor free sec}.  The reader will note that the argument is unique to odd-dimensional spheres, so does not answer the following question.  

\begin{question} \label{fin gen q}
Is there a finitely generated group $\Gamma$ and a homomorphism\\ $\rho: \Gamma \to \Diff^{\infty}_0(S^{2n})$ that does not extend to a $C^1$ (or even $C^r$ for some $1 <  r \leq \infty$) action on $B^{2n+1}$? 
\end{question}

\begin{proof}[Sketch proof of Corollary \ref{no sec cor}]
In the $n=1$ case, we can take $\Gamma$ to be a two-generated group as follows.   Any rotation of $S^1$ can be written as a commutator -- a nice argument for this using some hyperbolic geometry appears in Proposition 5.11 of \cite{Gh2} or Proposition 2.2 of \cite{Gh}.  So let $f$ and $g$ be such that their commutator $[f,g]$ is a finite order rotation, say a rotation of order 2.  We may even take $f$ and $g$ to be hyperbolic elements of $\PSL(2,\R) \subset \Diff_0^\infty(S^1)$.  Let $\tilde{f}$ and $\tilde{g}$ be lifts of $f$ and $g$ to diffeomorphisms of the threefold cover of $S^1$.  Since $f$ and $g$ have fixed points, we can choose $\tilde{f}$ and $\tilde{g}$ to be the (unique) lifts that have fixed points.  Then the commutator $[\tilde{f}, \tilde{g}]$ will be rotation of the threefold cover of $S^1$ by $\pi/3$.  Since the threefold cover of $S^1$ is also $S^1$, we can consider $\tilde{f}$ and $\tilde{g}$ as diffeomorphisms of $S^1$.  They then generate a subgroup $\Gamma$ of $\Diff_0^{\infty}(S^1)$ satisfying the relations 
\begin{enumerate}[i)]
\item $[\tilde{f}, \tilde{g}]^6 = 1$ and
\item $[\tilde{f}, [\tilde{f}, \tilde{g}]^2] = [\tilde{g}, [\tilde{f}, \tilde{g}]^2] = 1.$
\end{enumerate}
The second relation here comes from the fact that $[\tilde{f}, \tilde{g}]^2$ is the covering transformation.  There may, incidentally, be other relations satisfied by this group, but this is of no importance to us.  

We claim that $\Gamma$ does not extend to a subgroup of $\Diff_0(B^2)$.  To see this, we argue by contradiction.  Assume that there is a homomorphism $\phi: \Gamma \to \Diff_0^1(B^2)$ such that for any $\gamma \in \Gamma$, the restriction of $\phi(\gamma)$ to $\del B^2  = S^1$ agrees with $\gamma$.  

Let $r$ denote rotation of $S^1$ by $2\pi/3$, this is the element $[\tilde{f}, \tilde{g}]^2 \in \Gamma$ and so $\phi(r)$ is an order 3 diffeomorphism of the ball acting by rotation on the boundary.  In particular, it follows from Kerekjarto's theorem  that $\phi(r)$ is \emph{conjugate} to an order three rotation, hence has a unique interior fixed point $x$.   (A reader unfamiliar with Kerekjarto's theorem on finite order diffeomorphisms may wish to consult the very nice proof by Constantin and Kolev in \cite{CK}). 

By construction, $\tilde{f}$ and $\tilde{g}$ both commute with $r$ so $\phi(\tilde{f})$ and $\phi(\tilde{g})$ commute with $\phi(r)$, hence fix $x$.  The derivatives $D\phi(\tilde{f})_x$ and $D\phi(\tilde{g})_x$ commute with $D\phi(r)_x$ which acts as rotation by $2\pi/3$ on the tangent space.  Moreover, $[D\phi(\tilde{f})_x, D\phi(\tilde{g})_x] = D\phi(r)_x$.  But the centralizer of rotation by $2\pi/3$ in $\SL(2,\R)$ is abelian, so writing $D\phi(r)_x$ as a commutator of elements in its centralizer is impossible.   This is the desired contradiction, showing that no extension of the action of $\Gamma$ exists.  

The case for $n >1$ is similar.   We consider $S^{2n-1}$ as the unit sphere  $$\{(z_1, ..., z_n) \in \C^n\, | \, \sum \limits_{i=1}^n |z_i|^2 = 1\}.$$  The idea is to show that the finite order element $$r: (z_1, ..., z_n) \mapsto (\lambda_1 z_1, .... \lambda_n z_n)$$ where $\lambda_i$ are distinct $p^{th}$ roots of 1, can also be expressed as a product of commutators of elements $f_1, f_2, ... f_k$ that each commute with a power of $r$.  Then we can take $\Gamma$ to be the subgroup generated by the diffeomorphisms $f_i$.  Supposing again for contradiction that $\phi: \Gamma \to \Diff_0^1(B^{2n})$ is a section, one can show with an argument using Smith theory that the diffeomorphism $\phi(r) \in \Diff^1_0(B^{2n})$ has a single fixed point $x$.  It follows in a similar way to the $n=1$ case that the derivative of $\phi(r)$ at $x$ has abelian centralizer, giving a contradiction.  \\
 \end{proof}

\section{Actions of torsion free groups} \label{tor free sec}

The proof of Corollary \ref{no sec cor} relied heavily on finite order diffeomorphisms.   Ghys' proof of Theorem \ref{ghys thm} -- even in the case of even dimensional spheres -- also hinges on the clever use of finite order diffeomorphisms (and the tools that they bring: Smith theory, fixed sets, derivatives in $\SO(n)$, etc.).  Thus, we ask the following refinement of Question \ref{fin gen q}.  

\begin{question} Is there a finitely generated, torsion-free group $\Gamma$ and homomorphism $\rho: \Gamma \to \Diff^{\infty}_0(S^n)$ that does not extend to a smooth (or even $C^r$, for some $r \geq 1$) action on $B^{n+1}$? 
\end{question}

The following theorem answers this question for $n=1$.  

\begin{torfreethm}
There exists a finitely generated, torsion-free group $\Gamma$ and homomorphism $\phi: \Gamma \to \Diff^\infty(S^1)$ that does not extend to a $C^1$ action on $B^2$. 
\end{torfreethm}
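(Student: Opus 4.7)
The plan is to adapt the strategy of the sketch in Corollary~\ref{no sec cor}, replacing the finite-order rotation and Kerekjarto's theorem by a \emph{distortion element} whose derivative at an interior fixed point is constrained by a theorem of Franks and Handel on distortion in surface diffeomorphism groups. The goal is to produce a torsion-free $\Gamma$ containing an element $r$ that plays the role of the finite-order rotation in Ghys's proof.

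First, I would construct $\Gamma$ as follows. Starting, as in Corollary~\ref{no sec cor}, from hyperbolic $f, g \in \PSL(2,\R) \subset \Diff^\infty_0(S^1)$ with $[f,g]$ a rotation, pass to lifts $\tilde f, \tilde g$ to a finite cover so that $r := [\tilde f, \tilde g]$ satisfies relations $[\tilde f, r^m] = [\tilde g, r^m] = 1$ inherited from the deck group; but choose $f, g$ so that $r$ is an \emph{irrational} rotation rather than of finite order (so that $\Gamma$ has a chance to be torsion-free). Then enlarge $\langle \tilde f, \tilde g \rangle$ by additional generators $t_1, \dots, t_k$ acting smoothly on $S^1$, chosen with support disjoint from the dynamics of $\tilde f, \tilde g$ (e.g.\ by a pingpong or Baumslag--Solitar style construction), whose presence makes $r$ a distortion element of the resulting finitely generated subgroup $\Gamma \subset \Diff^\infty_0(S^1)$. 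Keeping $\Gamma$ torsion-free can be arranged by choosing the $t_i$ to generate (together with appropriate iterates of $\tilde f, \tilde g$) a free product structure on their respective supports.

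Next, assume for contradiction that $\phi: \Gamma \to \Diff^1_0(B^2)$ extends the action. Since $\rho(r)$ is an irrational rotation of $\partial B^2$, it has no boundary fixed points, and since $\phi(r)$ is isotopic to the identity, Brouwer's theorem gives an interior fixed point $x$ of $\phi(r)$. Because $r$ is distorted in $\Gamma$, the diffeomorphism $\phi(r)$ is a distortion element of the finitely generated subgroup $\phi(\Gamma) \subset \Diff^1_0(B^2)$, and the Franks--Handel distortion theorem then constrains $D\phi(r)_x$ (and hence $D\phi(r^m)_x$) to have both eigenvalues real and positive. Combined with the commutation relations $[\tilde f, r^m] = [\tilde g, r^m] = 1$, this forces $\phi(\tilde f), \phi(\tilde g)$ to fix $x$ and to have derivatives in the centralizer of $D\phi(r^m)_x$. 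Provided the construction is arranged so that $D\phi(r^m)_x$ is a \emph{nontrivial} unipotent Jordan block $I+N$ with $N \neq 0$, this centralizer is the abelian subgroup of matrices $\begin{pmatrix} a & b \\ 0 & a \end{pmatrix}$ with $a \neq 0$. Hence $[D\phi(\tilde f)_x, D\phi(\tilde g)_x] = I$, while the relation $[\tilde f, \tilde g] = r$ forces this commutator to equal $D\phi(r)_x \neq I$, a contradiction.

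The main obstacle is to simultaneously engineer a torsion-free $\Gamma$ satisfying (i) the commutator identity $r = [\tilde f, \tilde g]$ together with the commutations $[\tilde f, r^m] = [\tilde g, r^m] = 1$; (ii) a distortion relation for $r$ realizable by smooth diffeomorphisms of $S^1$ (note that $\Diff^\infty(S^1)$ imposes real constraints here, since the centralizer of an irrational rotation is small by Herman's theorem, ruling out naive Heisenberg-type embeddings); and (iii) the nondegeneracy condition $D\phi(r^m)_x \neq I$ needed for the abelian-centralizer argument. I expect (iii) to be the most delicate point, since Franks--Handel typically yields only real eigenvalues rather than a specific unipotent form; ruling out $D\phi(r^m)_x = I$ likely requires either replacing $r$ by an iterate, introducing a second distorted element whose interaction with $r$ propagates the nontriviality to the derivative, or analyzing the induced action on higher jets at $x$.
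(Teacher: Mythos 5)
There is a genuine gap, and in fact the proposed construction of $\Gamma$ cannot be carried out. You want $r=[\tilde f,\tilde g]$ to be an \emph{irrational} rotation while $\tilde f,\tilde g$ commute with some power $r^m$. But the relations $[\tilde f,r^m]=[\tilde g,r^m]=1$ in Corollary \ref{no sec cor} come precisely from $r^m$ being the deck transformation of a finite cover, which forces $r^m$ to have finite order; and more fatally, the centralizer of an irrational rotation in $\Homeo_+(S^1)$ consists only of rotations (any commuting homeomorphism preserves the unique invariant measure, namely Lebesgue measure, hence is a rotation), so no nonabelian pair $\tilde f,\tilde g$ can commute with $r^m$ and there is no hope of writing $r$ as their commutator. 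The idea you are missing is that the theorem asks for a torsion-free \emph{abstract} group $\Gamma$ with a homomorphism to $\Diff^\infty(S^1)$ --- the homomorphism need not be injective, and its image may well contain torsion. The paper keeps the finite-order rotation $t=[\tilde f,\tilde g]^2$ on the circle, but replaces the group by its (torsion-free) lift $\tilde\Gamma$ to $\Diff^\infty_0(\R)$, and then by the HNN extension $\hat\Gamma$ with an extra generator $a$ and relation $ata^{-1}=t^4$; the representation sends $a$ to the identity, which is consistent because $t$ maps to an order-$3$ rotation and $4\equiv 1 \bmod 3$.

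This design also resolves the two analytic difficulties you flag but do not close. First, distortion of $t$ is immediate from the Baumslag--Solitar relation ($|t^{4^n}|\le 2n+1$), with no need to engineer auxiliary generators $t_1,\dots,t_k$ compatible with a small centralizer. Second, your endgame requires $D\phi(r^m)_x$ to be a nontrivial unipotent block, which Franks--Handel does not provide (their theorem constrains periodic sets and rotation numbers, not eigenvalues, and certainly does not yield "both eigenvalues real and positive"); you correctly identify this as the delicate point but leave it open. In the paper the conjugation relation forces $D\phi(a)_xD\phi(t)_xD\phi(a)_x^{-1}=D\phi(t)_x^4$, so $D\phi(t)_x$ either fixes a tangent direction or is an order-$3$ rotation, and a linear-displacement (rotation number) argument on the annulus obtained by blowing up the fixed point rules out the fixed direction because $t$ acts as an order-$3$ rotation on $\del B^2$ and is distorted. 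Only then does the abelian-centralizer-of-a-rotation contradiction from Corollary \ref{no sec cor} apply.
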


Our proof modifies Ghys' construction by using a \emph{dynamical constraint} based on \emph{algebraic structure} to force a diffeomorphism to act by rotation at a fixed point.   The algebraic structure in question is the notion of \emph{distorted elements} and the constraint on dynamics follows from a powerful theorem of Franks and Handel.  We provide a brief introduction in the following few paragraphs; a reader familiar with this work may wish to skip ahead to Corollary \ref{FH cor 1} and the proof of Theorem \ref{tor free thm}.  

\boldhead{Distorted elements} 
Let $\Gamma$ be a finitely generated group, and let $S =  \{s_1,...,s_k\}$ be a symmetric generating set for $\Gamma$.  For an element $g \in \Gamma$, the \emph{word length} (or $S$-word length) of $g$ is the length of the shortest word in the letters $s_1,...,s_k$ that represents $g$.  We denote word length of $g$ by $|g|$.   

We say that $g \in \Gamma$ is \emph{distorted} provided that $g$ has infinite order and that $$\liminf \limits_{n \to \infty} \frac{|g^n|}{n} = 0.$$

Although the word length of $g^n$ depends on the choice of generating set $S$ for $\Gamma$, it is not hard to see that whether $g$ is distorted or not is independent of the choice of $S$.  

In \cite{FH}, Franks and Handel prove a theorem about the dynamics of actions of distorted elements in finitely generated subgroups of $\Diff_0(\Sigma)$, where $\Sigma$ is a closed, oriented surface.  The following theorem is a consequence of their main result.  We use the notation $\fix(g)$ for the set of points $x$ such that $g(x) = x$, and $\per(g)$ for the set of periodic points for $g$.

\begin{theorem}[Franks-Handel, \cite{FH}] \label{FH thm}
Suppose that $f$ is a distorted element in some finitely generated subgroup of $\Diff^1_0(S^2)$.  Suppose also that for the smallest $n>0$ such that $\fix(f^n) \neq \emptyset$, there are at least three points in $\fix(f^n)$.  Then $\per(f) = \fix(f^n)$.   
\end{theorem}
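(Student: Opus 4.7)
The plan is to reduce to a direct appeal to the main theorem of Franks--Handel. First I would pass from $f$ to $g := f^n$. A short word-length calculation (using $|g^k| = |f^{nk}|$) shows that powers of distorted elements remain distorted in the same finitely generated subgroup, so $g \in \Diff^1_0(S^2)$ is distorted. Since $\per(f) = \per(g)$ and $\fix(f^n) = \fix(g)$, it suffices to prove $\per(g) = \fix(g)$, where $F := \fix(g)$ has $|F| \geq 3$ by hypothesis, and by minimality of $n$ we may also assume every point of $F$ is a \emph{genuine} fixed point of $g$ (not just of a power).

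Next I would apply the main theorem of \cite{FH} to $g$, viewing it as an element of the mapping class group of $S^2 \setminus F$. Since $|F| \geq 3$, this punctured sphere is hyperbolic and admits a Thurston--Nielsen canonical form. The Franks--Handel theorem rules out pseudo-Anosov components in this canonical form for distorted elements, and tightly restricts the dynamics on any reducible pieces (forbidding, in particular, nontrivial permutations of pieces that would produce new periodic orbits). This forces the Thurston--Nielsen representative of $g$ rel $F$ to be isotopic to a composition of commuting multi-twists about reducing curves, with each complementary component either carrying the identity isotopy class or being permuted in a tightly controlled way.

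The final step is to translate this mapping-class-group-level conclusion into a statement about actual periodic points. A periodic point $x$ of $g$ of minimal period $k > 1$ would have to lie in a complementary component of the reducing system and contribute a nonzero Lefschetz--Nielsen fixed-point index to $g^k$ in a Thurston--Nielsen piece where $g$ acts either as a pseudo-Anosov or by nontrivially permuting subsurfaces. Both scenarios are precluded by the Franks--Handel constraints on distorted elements, yielding $\per(g) \subseteq \fix(g)$, and the reverse containment is trivial.

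The main obstacle is precisely this last translation between isotopy-class information and pointwise dynamics. The Franks--Handel theorem delivers its conclusion in the language of Thurston normal forms and distortion bounds on mapping class group elements, whereas the statement to be proved is about genuine orbits of $g$. Bridging this gap requires careful use of the index-theoretic refinement of Nielsen fixed-point theory, together with the observation that the hypothesis $|\fix(f^n)| \geq 3$ is what makes $S^2 \setminus F$ hyperbolic and hence makes the Thurston--Nielsen machinery (and with it the Franks--Handel restrictions) produce rigid rather than vacuous conclusions.
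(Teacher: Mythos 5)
The paper does not actually prove this statement: it imports it from Franks--Handel, whose main theorem asserts (among other cases) that a distortion element of a finitely generated subgroup of $\Diff^1_0(S^2)$ with at least three fixed points satisfies $\per=\fix$, and the text explicitly presents Theorem \ref{FH thm} as ``a consequence of their main result.'' Your first paragraph supplies exactly the intended derivation of that consequence: replace $f$ by $g=f^{n}$, check that $g$ is still distorted in the same finitely generated group (your word-length step needs the one-line interpolation $|f^{nq}|\le |f^{m}|+(n-1)|f|$ for $m=nq+r$, so that a subsequence witnessing distortion of $f$ yields one consisting of multiples of $n$ --- routine, but it is not literally the identity $|g^k|=|f^{nk}|$ alone), observe $\per(f)=\per(g)$ and $\fix(g)=\fix(f^{n})$, and quote Franks--Handel for $g$. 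At the level at which the paper operates, that first paragraph is a complete and correct proof.

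Your second and third paragraphs, which attempt to reprove the Franks--Handel theorem itself via Thurston--Nielsen theory, contain a genuine gap --- the one you flag yourself. Knowing that the Thurston normal form of $g$ relative to $F=\fix(g)$ is trivial (no pseudo-Anosov pieces, no nontrivially permuted components) does not by itself exclude non-fixed periodic points: a rotation by $\pi$ supported in a small disc disjoint from $F$ is isotopic to the identity rel $F$ yet has period-two orbits, so no purely isotopy-class or Lefschetz--Nielsen index argument in the complementary pieces can finish the job. The distortion hypothesis must be invoked a second time at the level of actual orbits (Franks and Handel do this by adjoining a putative periodic orbit to the marked set and deriving a positive translation length/linear displacement, contradicting distortion), and your sketch does not carry out that step. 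Since the paper treats the Franks--Handel result as a black box, this extra material is not needed; but as written it should not be presented as a proof of their theorem.
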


We can derive a corresponding statement about actions on the disc.  
\begin{corollary} \label{FH cor 1}
Suppose that $f$ is a distorted element in some finitely generated subgroup of $\Diff^1_0(B^2)$ with a periodic point on the boundary of period $k > 1$.  Then $\fix(f)$ consists of a single point.  
\end{corollary}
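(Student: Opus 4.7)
The plan is to reduce to the sphere setting of Theorem \ref{FH thm} by doubling $B^2$ along its boundary. Let $\Gamma \subset \Diff^1_0(B^2)$ be a finitely generated subgroup in which $f$ is distorted. Form $S^2$ as the double of $B^2$ along $\del B^2$, and to each $g \in \Gamma$ associate a map $\hat{g}$ of $S^2$ that acts as $g$ on each hemispheric copy of $B^2$. This yields a homomorphism $\Gamma \to \hat{\Gamma} \subset \Diff^1_0(S^2)$, and since word length cannot increase under a homomorphism (and $\hat{f}$ still has infinite order, as it restricts to $f$ on one hemisphere), $\hat{f}$ is distorted in $\hat{\Gamma}$.

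Next I would verify the hypothesis of Theorem \ref{FH thm}. Since $f|_{\del B^2}$ has a periodic point of period $k > 1$, its rotation number is a nonzero rational, so $f$ has no fixed points on $\del B^2$ (any orientation-preserving circle diffeomorphism with a fixed point has rotation number zero and therefore only fixed periodic points). By Brouwer's fixed point theorem, $f$ has at least one fixed point in $B^2$, which must then lie in the interior. Each such interior fixed point of $f$ lifts to two distinct fixed points of $\hat{f}$, one per hemisphere, so $\fix(\hat{f}) \neq \emptyset$ and $n = 1$ is the smallest integer with $\fix(\hat{f}^n) \neq \emptyset$. Meanwhile, the length-$k$ orbit of the boundary periodic point contributes $k \geq 2$ periodic non-fixed points of $\hat{f}$ lying on the equator.

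Now I would apply Theorem \ref{FH thm}: if $|\fix(\hat{f})| \geq 3$, the theorem would force $\per(\hat{f}) = \fix(\hat{f})$, contradicting the existence of the periodic non-fixed equatorial points. Hence $|\fix(\hat{f})| \leq 2$, and since each interior fixed point of $f$ contributes two fixed points to $\hat{f}$ while the boundary contributes none, $|\fix(f)| \leq 1$. Combined with the Brouwer-guaranteed fixed point, $\fix(f)$ consists of exactly one point.

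The main technical obstacle will be ensuring that the doubled map $\hat{f}$ is genuinely a $C^1$ diffeomorphism of $S^2$: a naive reflection across the equator need not be $C^1$-smooth in the standard smooth structure on the double, because normal derivatives of $f$ along $\del B^2$ need not vanish. One resolves this either by choosing the smooth structure on the double (in particular on a collar of the equator) to be compatible with the extensions of elements of $\Gamma$, or by appealing to the more general form of Franks--Handel's result valid on compact surfaces with boundary. Once that is in place, the remainder of the argument is a direct combination of Theorem \ref{FH thm} with Brouwer's theorem.
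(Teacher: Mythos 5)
Your proposal is correct and follows essentially the same route as the paper: double $B^2$ along its boundary, transfer the distorted element to the doubled $C^1$ action on $S^2$, use Brouwer plus the boundary periodic orbit to verify the hypotheses of Theorem \ref{FH thm}, and conclude that $\hat{f}$ has at most two fixed points, hence $f$ exactly one. The smoothing issue you flag at the end is real, and the paper resolves it exactly as you suggest, by invoking Parkhe's gluing/smoothing techniques \cite{Parkhe}, which do not change the fixed or periodic point sets.
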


\begin{proof} 
Suppose $f$ is distorted in $\Gamma \subset \Diff^1_0(B^2)$.  By the Brouwer fixed point theorem, $f$ has at least one fixed point.  Since $f$ has a periodic point on the boundary $S^1$, all fixed points for $f$ lie in the interior of $B^2$.  
Double $B^2$ along the boundary to get the sphere, and double the action of $\Gamma$.  This can be smoothed to a $C^1$ action on $S^2$ using the techniques of K. Parkhe in \cite{Parkhe}.  The smoothing construction will not change the set of fixed or periodic points.  Applying Theorem \ref{FH thm} to the action on $S^2$, we conclude that the doubled action on the sphere can have at most two fixed points (since there are non-fixed periodic points), so the original action of $f$ has a single fixed point. \\ 
\end{proof}

With Corollary \ref{FH cor 1} as a tool, we are now in a position to prove Theorem \ref{tor free thm}.  

\boldhead{Proof of Theorem \ref{tor free thm}}
Recall the group $\Gamma \subset \Diff_0(S^1)$ from the proof of Corollary \ref{no sec cor}.  It is generated by two elements $\tilde{f}$ and $\tilde{g}$, satisfying the relations $[\tilde{f}, \tilde{g}]^6 = 1$ and $[\tilde{f}, [\tilde{f}, \tilde{g}]^2] = [\tilde{g}, [\tilde{f}, \tilde{g}]^2] = 1$.  
Let $\tilde{\Gamma}$ be the lift of $\Gamma$ to the universal central extension $\Diff^\infty_0(\R)$ of $\Diff^\infty_0(S^1)$.  This is a central extension of $\Gamma$ by an element $t$, satisfying the relation $t = [\tilde{f}, \tilde{g}]^2$.  Explicitly, we can realize $\tilde{\Gamma}$ as the group of all lifts of elements of $\Gamma$ to the infinite cyclic cover $\R$ of $S^1$.  Since $\Diff^\infty_0(\R)$ is torsion free, $\tilde{\Gamma}$ is as well.  Finally, let $\hat{\Gamma}$ be the HNN extension of $\tilde{\Gamma}$ obtained by adding a generator $a$ and relation $ata^{-1} = t^4$.   HNN extensions of torsion free groups are torsion free, so $\hat{\Gamma}$ is torsion free also.  

We now construct a homomorphism $\rho: \hat{\Gamma} \to \Diff^\infty_0(S^1)$ and show that it does not admit an extension $\phi: \hat{\Gamma} \to \Diff^2(B^2)$.  The homomorphism $\rho$ will not be faithful (and in fact the image $\rho(\Gamma)$ will have torsion), but this is besides the point -- the interesting part of this question is extending $\rho$ as an action of $\Gamma$.  For example, a nonfaithful action (with torsion or not) of a free group $F$ on $S^1$ always extends to the disc \emph{as an action of a free group} just by arbitrarily extending each generator. 

To define $\rho$, set $\rho(a) = id$, and for all $\gamma \in \tilde{\Gamma}$ let $\rho(\gamma)$ be the action of $\gamma$ on the quotient $\R/\Z$, i.e. the quotient action on the original circle $S^1$.   In other words, the image of $\rho$ in $\Diff^\infty_0(S^1)$ \emph{is} the group $\Gamma$ of Corollary \ref{no sec cor}.   Note that the fact that $\rho(t) = [\rho(\tilde{f}), \rho(\tilde{g})]$ is rotation by $2\pi/3$ ensures that the relation $\rho(a)\rho(t)\rho(a)^{-1} = \rho(t)^4$ is satisfied.  

We claim that this action does not extend to a $C^2$ action on the disc.  To see this, suppose for contradiction that some extension $\phi: \hat{\Gamma} \to \Diff^2_0(B^2)$ exists.  If $\phi(t)$ has finite order, then it must be rotation by $\pi/3$, and so has a unique fixed point $x$.  Now we make the same argument (verbatim!) as in the proof of Corollary \ref{no sec cor}: since $\phi(t)$ commutes with $\phi(\tilde{f})$ and $\phi(\tilde{g})$, both $\phi(\tilde{f})$ and $\phi(\tilde{g})$ fix $x$ and have derivatives at $x$ in $\SO(2)$.  This contradicts the fact that $\phi(t)$ is the commutator of $\phi(\tilde{f})$ and $\phi(\tilde{g})$. 

If instead $\phi(t)$ has infinite order, then it is a distorted element in $\phi(\hat{\Gamma})$.  We know also that the restriction of $\phi(t)$ to the boundary is rotation by $2\pi/3$.  Applying Corollary $\ref{FH cor 1}$, we conclude that $\phi(t)$ has a single fixed point $x$.  If the derivative $D\phi(t)_x$ were a nontrivial rotation of order at least 3, we could again look at derivatives at $x$ and give the same argument as in the finite order case to get a contradiction.  Thus, it remains only to show that $D\phi(t)_x$ is a rotation of order at least 3.   We show that it is rotation of order 3 exactly.

\begin{lemma} The derivative $D\phi(t)_x$ is a rotation of order 3. 
\end{lemma}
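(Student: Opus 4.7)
The plan is to pin down $A:=D\phi(t)_x$ by combining three ingredients: the HNN relation $ata^{-1}=t^4$, the logarithmic distortion of $t$ that it forces, and Corollary~\ref{FH cor 1} applied to powers of $\phi(t)$.

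First I would set up the algebra. Iterating the HNN relation gives $a^k t a^{-k}=t^{4^k}$, so in the generating set $\{\tilde f,\tilde g,a\}$ one has $|t^{4^k}|\le 2k+|t|$, and a base-$4$ decomposition yields $|t^n|=O(\log^2 n)$ globally. In particular $t^n$ is a distorted element of $\hat\Gamma$ for every $n\neq 0$. Both $\phi(t)$ and $\phi(t)^4$ restrict on $\partial B^2$ to rotation by $2\pi/3$ and are distorted, so Corollary~\ref{FH cor 1} gives each a unique interior fixed point; since $\phi(a)$ conjugates $\phi(t)$ to $\phi(t)^4$, these points coincide, i.e.\ $\phi(a)(x)=x$. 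Differentiating the HNN relation at $x$ yields $D\phi(a)_x\cdot A\cdot D\phi(a)_x^{-1}=A^4$, so $A$ is conjugate to $A^4$ in $GL^+(2,\R)$. The chain rule along the distortion subsequence gives $\|A^n\|\le K^{|t^n|}$, which grows at most polynomially in $n$; hence every eigenvalue of $A$ has modulus $1$, and $A\in SL(2,\R)$.

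Combining $A\in SL(2,\R)$ with the conjugacy $A\sim A^4$ in $GL^+(2,\R)$ restricts $A$ to the short list $\{I,\,R_{\pm 2\pi/3},\,\text{positive parabolic}\}$: if $A$ is elliptic with rotation angle $\theta$ then $\theta\equiv 4\theta\pmod{2\pi}$ forces $\theta\in\{0,\pm 2\pi/3\}$; the cases $A=-I$ and negative parabolic are excluded because $A^4$ would then have eigenvalue $+1$ rather than $-1$; and hyperbolic $A$ has an eigenvalue off the unit circle.

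The main obstacle is ruling out $A=I$, $A$ positive parabolic, and $A=R_{-2\pi/3}$. The idea is that for every $n$ coprime to $3$, $\phi(t)^n$ is distorted and restricts on $\partial B^2$ to a nontrivial rotation, so Corollary~\ref{FH cor 1} gives $\fix(\phi(t)^n)=\{x\}$; hence $\phi(t)$ has no interior periodic orbit of period coprime to $3$. On the other hand, in each of those three cases the local rotation number of $\phi(t)$ at $x$ (equal to $0$ when $A=I$ or $A$ is parabolic, and to $2/3$ when $A=R_{-2\pi/3}$) differs from the boundary rotation number $1/3$; an annulus rotation-number argument applied to $\phi(t)$ on $B^2\setminus\{x\}$ then produces interior periodic orbits realizing every rational rotation number in the open interval spanned, in particular orbits of some period coprime to $3$. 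This contradicts the previous sentence and leaves only $A=R_{2\pi/3}$, a rotation of order $3$, as claimed. The delicate step is the rotation-number argument: since $\phi(t)$ is not assumed area-preserving, the classical Poincar\'e--Birkhoff theorem does not apply directly, and I would expect to invoke a non-area-preserving variant in the spirit of Franks' or Handel's periodic-point theorems for annulus homeomorphisms.
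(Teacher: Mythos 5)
Your setup runs parallel to the paper's: you use the HNN relation to get $|t^{4^n}|\le 2n+|t|$, apply Corollary~\ref{FH cor 1} to get a unique interior fixed point $x$, show $\phi(a)(x)=x$, differentiate $ata^{-1}=t^4$ at $x$ to get $A\sim A^4$ with $A=D\phi(t)_x$, and reduce to the alternative ``$A$ is an order-3 rotation or $A$ fixes a tangent direction.'' Two minor points there: $K^{O(\log^2 n)}$ is subexponential rather than polynomial (still enough to force unit-modulus eigenvalues, and in any case $A\sim A^4$ alone already rules out eigenvalues off the unit circle); and you do not need to exclude $A=R_{-2\pi/3}$, since that is already a rotation of order $3$ and the argument in the proof of Theorem~\ref{tor free thm} only needs the order to be at least $3$.

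The genuine gap is in your finishing move. To kill the cases $A=I$ and $A$ parabolic you blow up at $x$ and assert that an annulus homeomorphism whose boundary rotation numbers differ ($0$ on the inner circle, $1/3$ on the outer) must have interior periodic orbits realizing every intermediate rational, in particular orbits of period coprime to $3$, contradicting $\fix(\phi(t)^n)=\{x\}$. That is a Poincar\'e--Birkhoff-type statement and it is simply false without area preservation or a recurrence hypothesis: the map $(\theta,s)\mapsto(\theta+s/3,\,g(s))$ of $S^1\times[0,1]$, with $g$ a diffeomorphism of $[0,1]$ satisfying $g(s)>s$ on $(0,1)$, has boundary rotation numbers $0$ and $1/3$ and no interior periodic points at all, since every interior orbit drifts to the outer boundary. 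The Franks and Handel annulus theorems you hope to invoke all carry nonwandering, chain-recurrence, or measure-preservation hypotheses that $\phi(t)$ is not known to satisfy, so there is no ``non-area-preserving variant'' to appeal to. The paper closes the argument without any periodic-point theorem, using the distortion estimate you already proved: lift to the universal cover $\R\times[0,1]$ of the blown-up annulus, take the lift $\tilde t$ fixing a point of the inner boundary, and compare the horizontal displacement difference between a point on each boundary under $\tilde a^{\,n}\tilde t\,\tilde a^{-n}$. Since this element is a lift of $t^{4^n}$ (up to a deck transformation, which cancels when you take the \emph{difference} between the two boundary components), the difference grows like $4^n\cdot\frac13$ if the boundary rotation numbers disagree, whereas the word length bound $|t^{4^n}|\le 2n+|t|$ caps it by a linear function of $n$. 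Exponential versus linear is the contradiction. You have every ingredient for this argument in place; only the last step needs to be replaced.
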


\begin{proof}  
Since $t$ is central in $\Gamma$ and since $\rho(a)\rho(t)\rho(a)^{-1}x = \rho(t)^4x = x$ implies that $\rho(a)x = x$, the whole group $\phi(\hat{\Gamma})$ fixes $x$.  Moreover, the derivatives of $\rho(t)$ and $\rho(a)$ at $x$ satisfy
$$D\phi(a)_xD\phi(t)_xD\phi(a)^{-1}_x = D\phi(t)_x^4.$$   
This relation in $\GL(2,\R)$ implies that either $D\phi(t)_x$ has a fixed tangent direction or is an order 3 rotation.  
Our strategy to show that it is order 3 is to compare the ``rotation number" of $\phi(t)$ at the fixed point and on the boundary.  

Blow up the disc $B^2$ at $x$ to get a $C^0$ action of $\hat{\Gamma}$ on the closed annulus, $A$.  The action of $\hat{\Gamma}$ on one boundary component of $A$ is the linear action on the space of tangent directions at $x$ (so $t$ either acts with a fixed point or as an order 3 rotation), and on the other boundary it is the original action on $\del B^2$ as an order 3 rotation.  

With this setup, we can apply the notion of ``linear displacement" from \cite{FH} and conclude that since $\rho(t)$ is distorted, it must act on each boundary component of $A$ with the same \emph{rotation number} and hence act as an order 3 rotation on both (See lemma 6.1 of \cite{FH}).  But instead of defining ``linear displacement" and ``rotation number" here, it will be faster to give a complete, direct proof for our special case.  The reader familiar with rotation numbers for circle homeomorphisms will see that it readily generalizes.  

Suppose for contradiction that $t$ acts on one boundary component of $A$ with a fixed point.  
Let $\tilde{A}$ denote the universal cover of $A$, identified with $\R \times [0,1]$ with covering transformation $T: (x_1, x_2) \mapsto (x_1 +1, x_2)$.  

Let $\tilde{t} \in \Homeo_0(\tilde{A})$ be the lift of the action of $t$ to $\tilde{A}$ with a fixed point on one boundary component, without loss of generality say $(x_0, 1)$ is fixed.  Then $\tilde{t}$ acts on $\R \times \{0\}$ as translation by $m + 1/3$ for some integer $m$.   Let $\tilde{a}$ be any lift of the action of $a$.  

Now $\tilde{a}(\tilde{t}\,)^n\tilde{a}^{-1}$ is a lift of $(\tilde{t}\,)^{4^n}$, so is of the form $(\tilde{t}\,)^{4^n}T^{l}$ for some $l$.   In particular, considering the displacement difference between the points $(x_0, 0)$ and $(x_0, 1)$ we have
\begin{align*}
\| \tilde{a} (\tilde{t}\,)^n \tilde{a}^{-1} (x_0,1) - \tilde{a}(\tilde{t}\,)^n\tilde{a}^{-1} (x_0,0) \| &= \|(\tilde{t}\,)^{4^n}(x_0,1) - (\tilde{t}\,)^{4^n}(x_0,0)\|\\ &= \|(x_0, 1) -(x_0 + (m+1/3)^{4n},1)\| \\ &\sim (m+1/3)^{4n}
\end{align*}
However, the distance $\|\tilde{a}(\tilde{t}\,)^n\tilde{a}^{-1} (x_0,1) - \tilde{a}(\tilde{t}\,)^n\tilde{a}^{-1} (x_0,0)\|$ grows \emph{linearly} in $n$ -- it is bounded by the maximum displacement of $\tilde{a}$ and $\tilde{t}$.  Precisely, if 
$$d = \max_{z \in \tilde{A}}\left\{  \max \{ \|\tilde{a}(z)-z\|, \|\tilde{t}(z)-z\|\} \right\}$$ 
then we have $$2(n+2)d + 1 \leq \|\tilde{a}(\tilde{t}\,)^n\tilde{a}^{-1} (x_0,1) - \tilde{a}(\tilde{t}\,)^n\tilde{a}^{-1} (x_0,0)\|$$
and this is our desired contradiction.  
\end{proof}

\begin{remark}
It is possible to modify the construction in the proof Theorem \ref{tor free thm} to avoid finite order elements.  The idea is to modify $\rho(\tilde{f})$ slightly so that the diffeomorphism $\rho(t) := [\rho(\tilde{f}), \rho(\tilde{g})]$ is the composition of an order 3 rotation $r$ with an $r$-equivariant diffeomorphism $h$ supported on a collection of small intervals in $S^1$ that is conjugate to a translation on these intervals.  We then modify $\rho(a)$ so that it is remains $r$-equivariant, but is conjugate to an expansion on the intervals of $\supp(h)$ -- i.e. so that $h$ and $\rho(a)$ act by a standard Baumslag-Solitar action on these intervals.  Done correctly, $\rho(\tilde{f})$, $\rho(\tilde{g})$ and $\rho(a)$ will be infinite order diffeomorphisms, and will generate a subgroup of $\Diff^{\infty}_0(S^1)$ satisfying the relations 
$[\rho(t), \rho(\tilde{f})] = [\rho(t), \rho(\tilde{g})] = 1$ and 
$\rho(t) \rho(a) \rho(t)^{-1} = \rho(a)^4$.   We leave the details to the reader.  
\end{remark}

\section{(Non-existence of) exotic homomorphisms}  \label{proof sec}

In \cite{Herman}, Michael Herman proved the following stronger version of Theorem \ref{ghys thm} in the case where $n=1$.  

\begin{theorem}[Herman, \cite{Herman}]  Any group homomorphism $\Diff_0^{\infty}(S^1) \to \Diff_0^1(B^2)$ is trivial.   
\end{theorem}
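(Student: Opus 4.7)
The plan is to combine the Mather--Thurston simplicity of $\Diff_0^\infty(S^1)$ with the finite-order commutator construction used in the proof of Corollary \ref{no sec cor}. Since $\Diff_0^\infty(S^1)$ is simple as an abstract group, any group homomorphism out of it is either trivial or injective; so suppose for contradiction that $\phi : \Diff_0^\infty(S^1) \to \Diff_0^1(B^2)$ is nontrivial, and hence injective.

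I would then recycle the elements $\tilde f, \tilde g \in \Diff_0^\infty(S^1)$ built in the proof of Corollary \ref{no sec cor}: the element $s := [\tilde f, \tilde g]$ is a rotation of order $6$, and $r := s^2$ is the order-$3$ rotation that commutes with both $\tilde f$ and $\tilde g$. The diffeomorphism $\phi(r) \in \Diff_0^1(B^2)$ has order $3$, so by Kerekjarto's theorem it is topologically conjugate to a rotation and has a unique interior fixed point $x$. Because $\tilde f$ and $\tilde g$ commute with $r$, the diffeomorphisms $\phi(\tilde f), \phi(\tilde g)$ commute with $\phi(r)$, preserve $\fix(\phi(r)) = \{x\}$, and so fix $x$. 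Consequently $D\phi(\tilde f)_x$ and $D\phi(\tilde g)_x$ lie in the centralizer of $D\phi(r)_x$ in $\GL(2,\R)$.

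Once we know $D\phi(r)_x$ is a nontrivial order-$3$ rotation, its centralizer in $\GL(2,\R)$ is abelian (isomorphic to $\GL(1,\C)$), forcing $[D\phi(\tilde f)_x, D\phi(\tilde g)_x] = I$. But this commutator equals $D\phi(s)_x$, and squaring yields $D\phi(r)_x = I$, the desired contradiction.

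The main obstacle is showing that $D\phi(r)_x$ is nontrivial. Unlike in Corollary \ref{no sec cor}, no prescribed boundary action constrains the action of $\phi(r)$, so ruling out $D\phi(r)_x = I$ requires a direct argument. I would use an averaging trick: in a local $C^1$ chart around $x$, define $h(y) := \frac{1}{3}\bigl(y + \phi(r)(y) + \phi(r)^2(y)\bigr)$. Then $h$ is a $C^1$ map with $Dh_x = I$, so it is a local $C^1$ diffeomorphism, and the identity $h \circ \phi(r) = h$ forces $\phi(r)$ to be the identity on a neighborhood of $x$. This contradicts the uniqueness of $x$ as the fixed point of $\phi(r)$ guaranteed by Kerekjarto's theorem, completing the argument.
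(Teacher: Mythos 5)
Your proof is correct, but it takes a different route from the paper, which does not reprove Herman's theorem directly: it cites it and instead proves the generalization (Theorem \ref{main thm}) by a heavier argument designed to work for $\Diff_0^\infty(S^{2k-1})\to\Diff_0^1(B^m)$ with arbitrary $m$ --- there the fixed set of $\phi(f_p)$ is a positive-dimensional submanifold, so the paper must analyze the normal subgroups of the lift group $H'$, split into cases according to the kernel of the action on the fixed set, run a linear-algebra argument on centralizers of diagonal matrices in $\GL(m,\C)$, and induct on powers of $p$. What you do instead is upgrade the two-generator construction from Corollary \ref{no sec cor}: simplicity gives injectivity, so $\phi(r)$ still has order exactly $3$ even though no boundary condition is imposed, Kerekjarto gives the unique fixed point $x$, and the single commutator relation $r=[\tilde f,\tilde g]^2$ in the abelian centralizer of a nontrivial order-$3$ element of $\GL(2,\R)$ gives the contradiction. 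The one genuinely new ingredient you need --- and supply correctly --- is ruling out $D\phi(r)_x=I$, which the boundary data handled automatically in Corollary \ref{no sec cor}; your averaging map $h=\frac13(\mathrm{id}+\phi(r)+\phi(r)^2)$ is exactly the Bochner linearization trick, and it does force $\phi(r)=\mathrm{id}$ near $x$, contradicting uniqueness of the fixed point. (Two small points worth making explicit: the identity $D\phi(s)_x=[D\phi(\tilde f)_x,D\phi(\tilde g)_x]$ uses that $\phi(\tilde f)$ and $\phi(\tilde g)$ both fix $x$, so the chain rule applies at $x$; and the centralizer claim should be stated for an arbitrary nontrivial order-$3$ element of $\GL(2,\R)$, which is only conjugate to a rotation, not literally one --- conjugacy preserves abelianness of the centralizer, so this is harmless.) Your argument is more elementary and self-contained for the case $m=2$, $k=1$; the paper's machinery is what is needed to reach general $m$.
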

Herman's key tools are the deep fact that $\Diff_0^{\infty}(S^1)$ is simple, and the easy fact that $S^1$ is a finite cover of itself.  We combine some of these ideas with the techniques of Ghys in \cite{Gh} to prove a similar theorem for any odd dimensional sphere, with any group of diffeomorphisms of a ball as the target.   This is Theorem \ref{main thm} as stated in the introduction.

\begin{mainthm}
There is no nontrivial group homomorphism $\Diff_0^\infty(S^{2k-1}) \to \Diff^1_0(B^m)$ for any $m$, $k \geq 1$.    
\end{mainthm}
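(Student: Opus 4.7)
The plan is to combine the Mather--Thurston simplicity of the source group with the commutator-in-the-centralizer technique from the proof of Corollary \ref{no sec cor}, applied at the level of derivatives at a fixed point of the image of a carefully chosen finite-order rotation. By Mather--Thurston simplicity, $\Diff_0^\infty(S^{2k-1})$ is simple, so any nontrivial homomorphism $\phi$ has trivial kernel, and I assume for contradiction that $\phi$ is injective.

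Following Ghys, for an odd prime $p$ taken sufficiently large relative to $k$ and $m$, I would take $r \in \Diff_0^\infty(S^{2k-1})$ to be the rotation $(z_1, \ldots, z_k) \mapsto (\zeta^{a_1} z_1, \ldots, \zeta^{a_k} z_k)$ on $S^{2k-1} \subset \C^k$, with $\zeta = e^{2\pi i/p}$ and $a_1, \ldots, a_k$ distinct nonzero residues mod $p$. Then $r$ has order $p$, acts freely, and as in the sketch proof of Corollary \ref{no sec cor} admits a commutator decomposition $r = \prod_{i=1}^{\ell} [f_i, g_i]$ with $f_i, g_i \in \Diff_0^\infty(S^{2k-1})$ each commuting with a power of $r$. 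Apply $\phi$: the element $\phi(r) \in \Diff_0^1(B^m)$ has order exactly $p$ by injectivity, so Smith theory for $C^1$ periodic actions on $B^m$ guarantees that $\fix(\phi(r))$ is a nonempty mod-$p$ cohomology cell. At any fixed point $x$ the commutator relation passes to derivatives: $D\phi(r)_x = \prod [D\phi(f_i)_x, D\phi(g_i)_x]$ in $\GL(m,\R)$, with each $D\phi(f_i)_x$ and $D\phi(g_i)_x$ commuting with the corresponding power of $D\phi(r)_x$. The desired contradiction is that $D\phi(r)_x$ should be a nontrivial element of order $p$, and simultaneously a product of commutators of elements sitting in abelian centralizers, forcing it to be trivial.

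The main obstacle is precisely this last step. Unlike Ghys' setting in Corollary \ref{no sec cor}, where $\phi(r)$ is constrained to restrict to the original rotation on $\del B^{2k} = S^{2k-1}$ and Smith theory pins down a unique fixed point, here we have no a priori control on $\phi(r)$: the dimension $m$ is arbitrary, $\fix(\phi(r))$ may be any cohomology cell, and $D\phi(r)_x$ need not be regular semisimple. To address this, I plan to exploit the algebraic richness of the full group $\Diff_0^\infty(S^{2k-1})$ (as opposed to the finitely generated subgroups of Section \ref{group sec}): draw the $f_i, g_i$ from the very large centralizer of $r$, which contains the pullback of $\Diff_0^\infty$ of the quotient lens space $S^{2k-1}/\langle r\rangle$; vary the prime $p$ and the eigenvalue pattern so that $D\phi(r)_x$ is forced to be a regular element of finite order in $\GL(m,\R)$; and then use the abelianity of its centralizer to derive the contradiction. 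Herman's result is the $k=1$, $m=2$ base case, where Kerekj\'art\'o's classification of finite-order surface homeomorphisms supplies the needed regularity of $D\phi(r)_x$ essentially for free; the central new work in the higher-dimensional case is providing an adequate substitute for Kerekj\'art\'o using Smith theory and the structure of the centralizer of $r$ in the full diffeomorphism group.
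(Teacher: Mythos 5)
Your overall frame (simplicity forces injectivity; focus on the free order-$p$ rotation $f_p$; pass to derivatives at a fixed point) matches the paper's starting point, but the engine you propose for the contradiction --- forcing $D\phi(f_p)_x$ to be a \emph{regular} finite-order element of $\GL(m,\R)$ with abelian centralizer and then contradicting a commutator decomposition --- does not work, and the paper does something genuinely different at exactly this point. There is no mechanism to force regularity: $m$ is arbitrary and unrelated to $2k$, and $\fix(\phi(f_p))$ is in general a positive-dimensional submanifold (only codimension at least $2$ is guaranteed), so the eigenvalue $1$ of $D\phi(f_p)_x$ can have multiplicity up to $m-2$, in which case the centralizer contains a copy of $\GL(m-2,\R)$ and is very far from abelian. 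No choice of the prime $p$ or of the exponents on the source sphere controls the eigenvalue multiplicities of $D\phi(f_p)_x$, because $\phi$ is an arbitrary abstract homomorphism with no compatibility between source and target dimensions. This is precisely why the Kerekjarto/unique-fixed-point input of Corollary \ref{no sec cor} has no substitute in this setting, as you yourself observe; the proposed fix does not supply one.

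The paper's actual argument sidesteps abelian centralizers entirely. It works with the group $H'$ of all lifts to $S^{2k-1}$ of isotopically trivial diffeomorphisms of the lens space $S^{2k-1}/\langle f_p\rangle$ (the group you mention only in passing) and proves the key structural fact that $\langle f_p\rangle \cong \Z_p$ is the \emph{only} proper nontrivial normal subgroup of $H'$, using the torus of lifted coordinatewise rotations. It then runs a trichotomy on the kernel of the action of $\phi(H')$ on $\fix(\phi(f_p))$. When that action is trivial one gets a faithful linear representation $D\colon H'\to\GL(m,\C)$, and the contradiction comes not from an abelian centralizer but from its block-diagonal structure: composing projection to one block of the centralizer of $D\phi(f_p)(x)$ with the determinant gives a homomorphism to an abelian group that is nontrivial on $f_p$ (after choosing $p>m$), hence injective by the normal-subgroup classification --- impossible since $H'$ is nonabelian. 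The remaining case, kernel exactly $\Z_p$, is handled by an induction on the elements $f_{p^2}, f_{p^3},\dots$, which strictly decreases the dimension of the fixed set at each stage. Your proposal is missing both the normal-subgroup classification of $H'$ (the replacement for the abelian-centralizer trick) and this induction, so as written it does not close.
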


\begin{proof}
Let $n = 2k-1$ and identify $S^{n}$ with the unit sphere 
$$\{(z_1, ..., z_k) \in \C^n\, | \, \sum \limits_{i=1}^k |z_i|^2 = 1\}.$$  
For any prime $p$, there is a free $\Z_p$ -action on $S^{k}$ generated by the map $$f_p : (z_1, ..., z_k) \mapsto (\mu_1 z_1, ..., \mu_k z_k)$$ where $\mu_i$ are any $p^{\text{th}}$ roots of unity.    

Suppose that we have a nontrivial homomorphism $\phi: \Diff^\infty_0(S^{n}) \to \Diff_0(B^{m})$.  Since $\Diff^\infty_0(S^{n})$ is a simple group (a deep result due to Mather and Thurston, see e.g. \cite{Banyaga} for a proof), $\phi$ must be injective.   By the Brouwer fixed point theorem, $\phi(f_p)$ must fix a point.  Since $f_p$ is a finite order diffeomorphism, the set $\fix(\phi(f_p)) \subset B^m$ of fixed points of $\phi(f)$ is a submanifold of $B^m$ (one way to see this is to average a metric so that $f_p$ acts by isometries).  That $f_p$ is orientation preserving and of finite order further implies that $\fix(\phi(f_p))$ has codimension at least 2, this is because any finite order diffeomorphism $f$ is an isometry with respect to some metric, and if $f$ is nontrivial its derivative at a fixed point is a nontrivial finite order element of O($n$).  

Let $H$ be the group of isotopically trivial diffeomorphisms of $S^n / \langle f_p \rangle \cong S^n$.  We have an exact sequence
$$0 \to \Z_p \to H' \to H \to 1$$
where $H'$ is the group of all lifts of diffeomorphisms in $H$ to $f_p$-equivariant diffeomorphisms of $S^n$.   

We claim now that $\Z_p$ is the \emph{only} normal subgroup of $H'$.  To see this, suppose that $N \subset H'$ is a normal subgroup.  Then the image of $N$ in $H$ must either be trivial or all of $H$.  If the image is trivial, then either $N$ is trivial or $N = \Z_p$ and we are done.  If the image of $N$ in $H$ is all of $H$, we consider a $S^1\times ... \times S^1$ subgroup of $H$, where the $i^{\text{th}}$ $S^1$ factor is the norm 1 complex numbers mod $\mu_i$.  An element $(\lambda_1, ... \lambda_k) \in (S^1)^k /(\mu_1, ... ,\mu_k)$ acts on $S^n / \langle f_p \rangle$ by pointwise multiplication, 
$$(z_1, ... z_k) \mapsto (\lambda_1z_1, ... \lambda_k z_k).$$

Consider the extension $\Gamma$ as in the diagram below. 

\entrymodifiers={+!!<0pt,\fontdimen22\textfont2>}
\begin{displaymath}  
    \xymatrix @M=6pt { 0 \ar[r] & \Z_p  \ar[r] & H' \ar[r] & H \ar[r] &1\\
               0 \ar[r] & \Z_p  \ar[r] \ar@{=}[u] & \Gamma  \ar[r]  \ar@{^{(}->}[u] &S^1\times ... \times S^1 \ar[r] \ar@{^{(}->}[u]  &1}
\end{displaymath}

\noindent Specifically, $\Gamma$ is the group of all lifts of these actions $(z_1, ... z_k) \mapsto (\lambda_1z_1, ... \lambda_k z_k)$ to $S^n$, the $p$-fold cover of $S^n/\langle f_p \rangle$.  It may be helpful for the reader to consider the $n=1$ case, in which case we are just working with rotations of $S^1$ and their lifts to a $p$-fold cover of $S^1$.  

Note that $N \cap \Gamma$ is a normal subgroup of $\Gamma$ that projects to the full group $S^1 \times ... \times S^1$.  In particular, since $(\mu_1^{\frac{1}{p}}, ... \mu_n^{\frac{1}{p}}) \in S^1 \times ... \times S^1$, we know that some diffeormorphism $g$ of the form 
$$(z_1, ... z_k) \overset{g}\mapsto ( \mu_1^{n_1+\frac{1}{p}}z_1, ... \mu_k^{n_k+ \frac{1}{p}} z_k), \, \, \,\, n_i \in \Z$$ 
lies in $\Gamma$, hence in $H'$.  It follows that $g^p = f_p$ is a generator of $\Z_p$, so $\Z_p \subset N$.   Since $\Z_p \subset N$ and $N$ projects to $H$, it follows that $N = H'$, which is what we wanted to show.  

Having shown that $\Z_p$ is the only normal subgroup of $H'$, we can conclude that the action of $\phi(H')$ on $\fix(\phi(f_p))\subset B^m$ is either faithful, trivial, or has kernel $\Z_p$.  We already know that $\Z_p$ lies in the kernel -- this is $\phi(f_p)$ acting on its fix set -- so the action of $\phi(H')$ is not faithful.   If the action is trivial, then for $x \in \fix(\phi(f_p))$, we get a representation $D: H' \to \GL(m, \R) \subset \GL(m, \C)$ by sending a diffeomorphism $f$ to the derivative of $\phi(f)$ at $x$.  
Since $\phi(f_p)$ has nontrivial derivative at any point, and $\Z_p = \langle f_p \rangle$ is the only normal subgroup of $H'$, the representation $D$ must be faithful.   We will show this is impossible.  Indeed, it should already seem believable to the reader that $H'$ is a ``large" group and so is not linear.  Here is a short, elementary argument to make this clear.  

\begin{proof}[Proof that $D$ cannot be a faithful representation]
Since $D\phi(f_p)(x)$ has order $p$, after conjugation in $\GL(m, \C)$ we may assume it is diagonal of the form
$$
\left[\begin{array}{cccc}
\alpha_1 I_{n_1} &0 &\cdots &0 \\
0 &\alpha_2 I_{n_2}&\cdots &0  \\
\vdots & \vdots &\ddots &\vdots \\
0 &0 &\cdots &\alpha_k I_{n_k}\\ 
\end{array}\right]
$$
where $\alpha_i$ are each distinct $p^\text{th}$ roots of unity, the distinct complex eigenvalues of $D\phi(f_p)(x)$, and $I_{n_i}$ is the $n_i \times n_i$ square identity matrix.  

The centralizer of such a matrix in $\GL(m, \C)$ is the set of block diagonals of the form 

$$
\left[\begin{array}{cccc}
A_{n_1} &0 &\cdots &0 \\
0 &A_{n_2}&\cdots &0  \\
\vdots & \vdots &\ddots &\vdots \\
0 &0 &\cdots &A_{n_k}\\ 
\end{array}\right]
$$
with $A_{n_i} \in \GL(n_i, \C)$.  In other words, the centralizer is a subgroup isomorphic to $\GL(n_1, \C) \times \GL(n_2, \C) \times ... \times \GL(n_k,\C)$.   In particular, (after conjugation) we may view $H'$ as a subgroup of $\GL(n_1, \C) \times GL(n_2, \C) \times ... \times \GL(n_k,\C)$, with $f_p \in H$ a central element.  

Since $D\phi(f_p)(x)$ has order $p$, at least one eigenvalue is not 1.  Without loss of generality, assume $\alpha_1 \neq 1$.  Now consider the homomorphism $H' \to \R$ given by projecting $\GL(n_1, \C) \times \GL(n_2, \C) \times ... \times \GL(n_k,\C)$ onto the first factor -- i.e. onto $\GL(n_1, \C)$ -- and then taking the determinant.   We may assume that we chose $p > m$, so as to ensure that the image $\alpha_1^{n_1}$ of $f_p$ under this homomorphism is nontrivial.  However, we showed above that the subgroup generated by $f_p$ was the only normal subgroup of $H'$.  This means that this homomorphism to $\R$ must be faithful -- but this is impossible since $H'$ itself is nonabelian.  
\end{proof}

Thus, it remains only to deal with the case where $H'$ acts on $\fix(\phi(f_p))$ with kernel $\Z_p$.  In this case, we introduce an inductive argument.  Consider the diffeomorphism $$f_{p^2} : (z_1, ..., z_k) \mapsto (\nu_1 z_1, ..., \nu_k z_k)$$ where $\nu_i^2 = \mu_i$.  Then $f_{p^2}$ is an order $p^2$ diffeomorphism acting freely on $S^n$, commuting with $f_p$ and so an element of $H'$.   Since $f_{p^2} \notin \Z_p$, we know that $\phi(f_{p^2})$ acts nontrivially on $\fix(\phi(f_p))$.   Moreover, $\fix(\phi(f_{p^2})) \subset \fix(\phi(f_p))$, and is a nonempty submanifold of codimension at least two.  

As before, we consider a group of diffeomorphisms of a quotient of $S^n$.  Let $H_2$ be the group of isotopically trivial diffeomorphisms of $S^n / \langle f_{p^2} \rangle$.  Since $S^n / \langle f_{p^2} \rangle$ is a compact manifold, $H_2$ is a simple group.  Let $H_2'$ be the group of all lifts of elements of $H_2$ to $S^n$.  
The argument we gave above for $H$ works (essentially verbatim) to show that $\langle f_p \rangle \cong \Z_p$, and $\langle f_{p^2} \rangle \cong  \Z_{p^2}$ are the only normal subgroups of $H_2'$.  
   
Now consider the action of $H_2'$ on $\fix(\langle f_{p^2} \rangle)$.  If the action is trivial, we get as before a global fixed point and a linear representation $H_2' \to \GL(m, \R)$.  The argument using matrix centralizers above can be applied again in this case to derive a contradiction.   Otherwise, the action of $H'_2$ on $\fix(\phi(f_{p^2})$ in nontrivial.  In this case, we can proceed inductively by considering higher powers of $p$ and corresponding diffeomorphisms $f_{p^k}$.  Each time we will reduce the dimension of the fix set (a finite process) or derive a contradiction.\\
\end{proof}

\begin{remark} Note that our proof depended on the fact that $S^{2k-1}$ admits finite order diffeomorphisms that act freely, and so it does not readily generalize to odd dimensional spheres.  We conclude with a natural follow-up problem.  
\end{remark}

\begin{problem}
Describe all homomorphisms $\Diff_0^\infty(S^{2n}) \to \Diff_0^1(B^m)$.  Are any nontrivial?  
\end{problem}

\newpage

Dept. of Mathematics 

University of Chicago 

5734 University Ave. Chicago, IL 60637 

E-mail: mann@math.uchicago.edu


\begin{thebibliography}{9}

\bibitem{Banyaga} A. Banyaga
\textit{The structure of classical diffeomorphism groups}. 
Kluwer Academic, 1997.

\bibitem{CF} D. Calegari, M. Freedman 
\textit{Distortion in transformation groups}.
Geometry \& Topology 10 (2006) 267-293.

\bibitem{Ce} J. Cerf.  
\textit{La stratification naturelle des espaces de fonctions diff\'erentiables r\'eelles et le th\'eor\`eme de la pseudo-isotopie}. Inst. Hautes \'Etudes Sci. Publ. Math. No. 39 (1970) 5-173.

\bibitem{CK} A. Constantin, B. Kolev. 
\textit{The theorem of Ker\'ekj\'art\'o on periodic homeomorphisms of the disc and the sphere} 
L'Enseignement Math\'ematique 40 (1994),193-193.

\bibitem{CS} D. Crowley, T. Schick  
\textit{The Gromoll filtration, KO-characteristic classes and metrics of positive scalar curvature}.
Preprint. arXiv:1204.6474v3.

\bibitem{FH} J. Franks, M. Handel  
\textit{Distortion Elements in Group actions on surfaces}. 
Duke Math. J. 131, no. 3 (2006), 441-468

\bibitem{Gh} E. Ghys 
\textit{Prolongements des diff\'eomorphismes de la sph\`ere}. 
L'Enseignement Math\'ematique, 37 (1991) 45-59

\bibitem{Gh2} E. Ghys  
\textit{Groups acting on the circle}. 
L'Enseignement Math\'ematique, 47 (2001) 329-407 

\bibitem{Hatcher} A. Hatcher 
\textit{A proof of the Smale conjecture} $\Diff(S^3) \cong O(4)$. 
Ann. Math. 117 no.3 (1983) 553-607.

\bibitem{Herman} M. Herman 
\textit{An application of the simplicity of} $\Diff^k(T^1)$.
Unpublished.  \\
\noindent www.college-de-france.fr\textbackslash media\textbackslash jean-christophe-yoccoz\textbackslash UPL61528\textunderscore no\textunderscore morphism.pdf

\bibitem{Parkhe} K. Parkhe
\textit{Smooth gluing of group actions and applications}.  
Preprint.  arXiv:1210.2325

\bibitem{Smale} S. Smale.
\textit{Diffeomorphisms of the 2-sphere}. 
Proc. Amer. Math. Soc. 10 (1959) 621-626


\vspace{1in}

\end{thebibliography}
\end{document}